\begin{document}

\newcounter{lemma}
\newcommand{\lemma}{\par \refstepcounter{lemma}%
{\bf Лемма \arabic{lemma}.}}

\newcounter{corollary}
\newcommand{\corollary}{\par \refstepcounter{corollary}%
{\bf Следствие \arabic{corollary}.}}

\newcounter{remark}
\newcommand{\remark}{\par \refstepcounter{remark}%
{\bf Замечание \arabic{remark}.}}

\newcounter{theorem}
\newcommand{\theorem}{\par \refstepcounter{theorem}%
{\bf Теорема \arabic{theorem}.}}

\newcounter{proposition}
\newcommand{\proposition}{\par \refstepcounter{proposition}%
{\bf Предложение \arabic{proposition}.}}

\renewcommand{\refname}{\centerline{\bf Список литературы}}

\newcommand{\proof}{{\it Доказательство.\,\,}}

\noindent УДК 517.5

{\bf Е.А.~Севостьянов} (Житомирский государственный университет
имени Ивана Франко)

{\bf Є.О.~Севостьянов} (Житомирський державний університет імені
Івана Фран\-ка)

{\bf E.A.~Sevost'yanov} (Zhytomyr Ivan Franko State University)

\medskip
{\bf О неравенстве типа Полецкого для отображений римановых
поверхностей}

{\bf Про нерівність типу Полецького для відображень ріманових
поверхонь}

{\bf On Poletsky type inequality for mappings of Riemannian
surfaces}

\medskip\medskip
В статье получены верхние оценки искажения модуля семейств кривых
при отображениях класса Соболева, дилатация которых локально
интегрируема. Как следствие, получены теоремы о локальном и
граничном поведении указанных отображений.

\medskip\medskip
У статті отримано верхні оцінки спотворення модуля сімей кривих при
відображеннях класу Соболєва, внутрішня дилатація котрих є локально
інтегровною. Як наслідок, отримано теореми про локальну і межову
поведінку вказаних відображень.

\medskip\medskip
In this paper, we obtain upper estimates for the distortion of the
modulus of families of paths under mappings of the Sobolev class,
whose dilatation is locally integrable. As a consequence, theorems
on the local and boundary behavior of the indicated mappings are
obtained.

\newpage
{\bf 1. Введение.} Настоящая статья посвящена изучению отображений с
ограниченным и конечным искажением, активно изучаемых в последнее
время, см. напр.,~\cite{MRV$_1$}, \cite{MRSY} и~\cite{Va}. В
частности, речь идёт об изучении этих отображений в том случае,
когда областями их определения и значения являются некоторые области
римановых поверхностей гиперболического типа. Отметим несколько
последних работ, посвящённых тем же вопросам, см. напр.,~\cite{RV}
и~\cite{RV$_1$}.

Принципиальное значение для исследования отображений имеют оценки
искажения модуля при них (см., напр., \cite[разд.~2.3]{MRV$_1$},
\cite[разд.~4.1]{MRSY}, \cite[определение~13.1]{Va} и
\cite[теорема~3.1]{Va$_2$}). В первую очередь, именно эти оценки
помогают исследовать локальное и граничное поведение отображений
(см., напр.,~\cite[теоремы~17.13, 17.15]{Va},
\cite[теорема~4.2]{Va$_2$}, \cite[теоремы~3.6--3.7]{Na$_2$}). Одна
из них, в частности, получена автором в~\cite{Sev$_2$}. В настоящей
заметке мы продолжаем исследования в этом направлении, получая ещё
более сильное по сравнению с~\cite{Sev$_2$} неравенство типа
Полецкого (см.~\cite[теорема~1]{Pol}).

\medskip
Напомним определения. {\it Римановой поверхностью} будет называться
двумерное многообразие со счётной базой, в котором отображения
перехода между соответствующими картами являются конформными
отображениями, см., напр.,~\cite{RV}. Рассматриваемые ниже римановы
поверхности ${\Bbb S}$ и ${\Bbb S}_*$ будут предполагаться
поверхностями {\it гиперболического типа}, т.е., поверхностями,
конформно-эквивалентными единичному кругу ${\Bbb D}=\{z\in {\Bbb C}:
|z|<1\}$ (см. \cite[\S\,6, разд.~1]{KAG}). Другими словами, мы
рассматриваем те и только те римановы поверхности, которые являются
конформно эквивалентными фактор пространству ${\Bbb D}/G,$ где $G$
-- некоторая группа дробно-линейных автоморфизмов единичного круга
на себя, не имеющая неподвижных точек и действующая разрывно в
${\Bbb D}.$ Напомним, что каждый элемент $p_0$ фактор-пространства
${\Bbb D}/G$ является {\it орбитой} точки $z_0\in {\Bbb D},$ т.е.,
$p=\{z\in {\Bbb D}: z=g(z_0), g\in G\}.$ Всюду далее в единичном
круге ${\Bbb D}$ используется так называемая {\it гиперболическая
метрика:}
\begin{equation}\label{eq3}
h(z_1, z_2)=\log\,\frac{1+t}{1-t}\,,\quad
t=\frac{|z_1-z_2|}{|1-z_1\overline{z_2}|}\,,
\end{equation}
а также
{\it гиперболические площадь} множества $S\subset {\Bbb D}$  и
{длина} кривой $\gamma:[a, b]\rightarrow {\Bbb D},$ которые
задаются, соответственно, соотношениями
\begin{equation}\label{eq1}
h(S)=\int\limits_S\frac{4\,dm(z)}{(1-|z|^2)^2}\,,\quad
z=x+iy\,,\quad
s_h(\gamma):=\sup\limits_{\pi}\sum\limits_{k=0}^nh(\gamma(t_k),
\gamma(t_{k+1}))\,,
\end{equation}
где $h$ из~(\ref{eq3}), а $\sup$ берётся по всем разбиениям
$\pi=\{a=t_0\leqslant t_1\leqslant t_2\leqslant\ldots\leqslant
t_n=b\},$
(см. \cite[(2.4), (2.5)]{RV}). Прямыми вычислениями нетрудно
убедиться, что гиперболические метрика, длина и площадь инвариантны
относительно дробно-линейных отображений единичного круга на себя.

\medskip
Для точки $y_0\in {\Bbb D}$ и числа $r\geqslant 0$ определим {\it
гиперболический круг} $B_h(y_0, r)$ и {\it гиперболическую
окружность} $S_h(y_0, r)$ посредством равенств
\begin{equation}\label{eq7}
B_h(y_0, r):=\{y\in {\Bbb D}: h(y_0, y)<r\}\,, S_h(y_0, r):=\{y\in
{\Bbb D}: h(y_0, y)=r\}\,. \end{equation}
Римановы поверхности можно метризовать следующим образом. Если $p_1,
p_2\in {\Bbb D}/G,$ полагаем
\begin{equation}\label{eq2}
\widetilde{h}(p_1, p_2):=\inf\limits_{g_1, g_2\in G}h(g_1(z_1),
g_2(z_2))\,,
\end{equation}
где $p_i=G_{z_i}=\{\xi\in {\Bbb D}:\,\exists\, g\in G:
\xi=g(z_i)\},$ $i=1,2.$ В последнем случае множество $G_{z_i}$ будем
называть {\it орбитой} точки $z_i,$ а $p_1$ и $p_2$ назовём {\it
орбитами} точек $z_1$ и $z_2,$ соответственно. В дальнейшем
$$\widetilde{B}(p_0, r):=\{p\in {\Bbb S}: \widetilde{h}(p_0, p)<r\}\,,\quad \widetilde{S}(p_0, r):=\{p\in
{\Bbb S}: \widetilde{h}(p_0, p)=r\}$$
-- круг и окружность с центром в точке $p_0$ на поверхности ${\Bbb
S}.$ Всюду далее $B(z_0, r)$ и $S(z_0, r)$ обозначают круг и
окружность с центром в точек $z_0\in {\Bbb C}$ на плоскости.

\medskip
С целью упрощения исследований, введём в рассмотрение так называемое
{\it фундаментальное множество} $F.$ Определим его как подмножество
${\Bbb D},$ содержащее одну и только одну точку орбиты $z\in
G_{z_0}$ (см. \cite[\S\,9.1, гл.~9]{Berd}). {\it Фундаментальной
областью} $D_0$ называется область в ${\Bbb D,}$ обладающая
свойством $D_0\subset F\subset \overline{D_0}$ такая, что
$h(\partial D_0)=0$ (см. там же). Важнейшим примером фундаментальной
области является {\it многоугольник Дирихле},
\begin{equation}\label{eq4}
D_{\zeta}=\bigcap\limits_{g\in G, g\ne I}H_g(\zeta)\,,
\end{equation}
где $H_g(\zeta)=\{z\in {\Bbb D}: h(z, \zeta)<h(z, g(\zeta))\}$ (см.
\cite[соотношение~(2.6)]{RV}). Пусть $\pi$ -- естественная проекция
${\Bbb D}$ на ${\Bbb D}/G,$ тогда $\pi$ -- аналитическая функция,
конформная на $D_0$ (см. также \cite[предложение~9.2.2]{Berd} и
коментарии после~(2.11) в~\cite{RV}). Заметим, кроме того, что между
точками $F$ и ${\Bbb D}/G,$ а значит, и между точками $F$ и ${\Bbb
S},$ существует взаимно однозначное соответствие. В частности, для
измеримого множества $E\subset {\Bbb D}/G$ полагаем
\begin{equation}\label{eq2B}
\widetilde{h}(E):=h(\pi^{\,-1}(E))\,,
\end{equation}
где $h$ -- гиперболическая мера в единичном круге с элементом
площади $dh(z)=\frac{4\,dm(z)}{(1-|z|^2)^2},$ $m$ -- плоская мера
Лебега. Здесь и далее множество $E\subset {\Bbb D}/G$ (или, более
общо, $E\subset {\Bbb S}$) будет называться измеримым, если $E$
можно покрыть счётным числом открытых множеств $U_k,$ $k=1,2,\ldots
,$ гомеоморфных единичному кругу посредством отображения
$\varphi_k:U_k\rightarrow D$ так, что $\varphi_k(U_k\cap E)$
измеримо относительно плоской меры Лебега. Аналогично можно дать
определение борелевского множества $E\subset {\Bbb D}/G$ ($E\subset
{\Bbb S}$). Из определения римановой поверхности вытекает, что
множество $E$ является измеримым (борелевым) тогда и только тогда,
когда множество $\pi^{\,-1}(E)$ измеримо (борелево) в единичном
круге.

\medskip
Пусть $D,$ $D_{\,*}$ -- области на римановых поверхностях ${\Bbb S}$
и ${\Bbb S}_{\,*},$ соответственно.  Обозначим через $\widetilde{h}$
метрику на римановой поверхности ${\Bbb S},$ а через
$\widetilde{h_*}$ -- на римановой поверхности ${\Bbb S}_*.$ Всюду,
если не оговорено противное, мы считаем, что ${\Bbb S}=D/G$ и ${\Bbb
S}_*=D/G_*,$ где $G$ и $G_*$ -- некоторые группы дробно-линейных
автоморфизмов единичного круга. Элементы длины и объёма обозначаются
на поверхностях ${\Bbb S}$ и ${\Bbb S}_*,$ соответственно,
$ds_{\widetilde{h}},$ $d\widetilde{h}$ и $ds_{\widetilde{h_*}},$
$d\widetilde{h_*}.$ Отображение $f:D\rightarrow D_{\,*}$ будет
называться {\it дискретным}, если прообраз $f^{-1}\left(y\right)$
каждой точки $y\in D_{\,*}$ состоит только из изолированных точек.
Отображение $f:D\rightarrow D_{\,*}$ будет называться {\it
открытым}, если образ любого открытого множества $U\subset D$
является открытым множеством в $D_{\,*}.$ Определение отображений
класса Соболева $W_{\rm loc}^{1,1}$ на римановой поверхности может
быть найдено, напр., в работе~\cite{RV}. В дальнейшем для
отображений $f:D\rightarrow D_{\,*}$ класса $W_{\rm loc}^{1,1}$ в
локальных координатах $f_{\overline{z}} = \left(f_x + if_y\right)/2$
и $f_z = \left(f_x - if_y\right)/2,$ $z=x+iy.$ Кроме того, {\it
норма} и {\it якобиан} отображения $f$ в локальных координатах
выражаются, соответственно, как $\Vert
f^{\,\prime}(z)\Vert=|f_z|+|f_{\overline{z}}|$ и
$J_f(z)=|f_z|^2-|f_{\overline{z}}|^2.$ Будем говорить, что $f\in
W_{\rm loc}^{1,2}(D),$ если $f\in W_{\rm loc}^{1,2}$ и, кроме того,
в локальных координатах $\Vert f^{\,\prime}(z) \Vert\in L^2_{\rm
loc}(D).$ {\it Дилатация порядка $p$} отображения $f$ в точке $z$
определяется соотношением
\begin{equation}\label{eq16}
K_f(z)=\frac{|f_z|+|f_{\overline{z}}|}{|f_z|-|f_{\overline{z}}|}
\end{equation}
при $J_f(z)\ne 0,$ $K_f(z)=1$ при $\Vert f^{\,\prime}(z)\Vert=0$ и
$K_f(z)=\infty$ в остальных случаях. Путём непосредственных
вычислений нетрудно убедиться, что величина $K_f(z)$ не зависит от
локальных координат. Как обычно, кривая $\gamma$ на римановой
поверхности ${\Bbb S}$ определяется как непрерывное отображение
$\gamma:I\rightarrow {\Bbb S},$ где $I$ -- конечный отрезок,
интервал либо полуинтервал числовой прямой. Пусть $\Gamma$ --
семейство кривых в ${\Bbb S}.$ Борелевская функция $\rho:{\Bbb
S}\rightarrow [0, \infty]$ будет называться {\it допустимой} для
семейства $\Gamma$ кривых $\gamma,$ если
$\int\limits_{\gamma}\rho(p)\,ds_{\widetilde{h}}(p)\geqslant 1$ для
всякой кривой $\gamma\in \Gamma.$ Последнее коротко записывают в
виде: $\rho\in {\rm adm}\,\Gamma.$ {\it Модулем} семейства $\Gamma$
называется вещественнозначная функция
$$M(\Gamma):=\inf\limits_{\rho\in {\rm adm}\,\Gamma}\int\limits_{\Bbb
S}\rho^2(p)\,d\widetilde{h}(p)\,.$$

Пусть $\Delta \subset \Bbb R$~--- открытый интервал числовой прямой,
$\gamma:  \Delta\rightarrow {\Bbb S}$~--- локально спрямляемая
кривая. В таком случае, очевидно, существует единственная
неубывающая функция длины $l_{\gamma}: \Delta\rightarrow
\Delta_{\gamma}\subset \Bbb{R}$ с условием $l_{\gamma}(t_0)=0,$ $t_0
\in \Delta,$ такая, что значение $l_{\gamma}(t)$ равно длине
подкривой $\gamma\mid_{[t_0, t]}$ кривой $\gamma,$ если $t>t_0,$ и
длине подкривой $\gamma\mid_ {[t,\,t_0]}$ со знаком минус, если
$t<t_0,$ $t\in \Delta.$ Пусть $g:|\gamma|\rightarrow {\Bbb S}_*$~---
непрерывное отображение, где $|\gamma| = \gamma(\Delta)\subset {\Bbb
S}.$ Предположим, что кривая $\widetilde{\gamma}=g\circ \gamma$
также локально спрямляема. Тогда, очевидно, существует единственная
неубывающая функция $L_{\gamma,\,g}: \,\Delta_{\gamma} \rightarrow
\Delta_{\widetilde{\gamma}}$, такая, что
$L_{\gamma,\,g}(l_{\gamma}(t))\,=\,l_{\widetilde{\gamma}}(t)$ при
всех $t\in\Delta.$ Если кривая $\gamma$ задана на отрезке $[a, b]$
или полуинтервале $[a, b),$ то мы будем считать, что $a=t_0.$ Кривая
$\gamma$ называется ({\it полным}) {\it поднятием кривой
$\widetilde{\gamma}$ при отображении $f:D\rightarrow {\Bbb S}_*,$}
если $\widetilde{\gamma}=f \circ \gamma.$

\medskip
Следующее определение может быть найдено
в~\cite[определение~5.2]{Va} либо \cite[разд.~8.4]{MRSY}. Говорят,
что отображение $f: D\rightarrow {\Bbb R}^n$ принадлежит классу
$ACP$ в области $D$ ({\it абсолютно непрерывно на почти всех кривых}
в области $D$), пишем $f\in ACP,$ если для почти всех кривых
$\gamma$ в области $D$ кривая $\widetilde{\gamma}=f\circ\gamma$
локально спрямляема, при этом, функция длины $L_{\gamma,\,f},$
введённая выше, абсолютно непрерывна на всех отрезках, лежащих в
$\Delta_{\gamma}.$ Здесь и далее некое свойство $P$ выполнено для
почти всех кривых, если модуль семейства кривых, для которого это
свойство нарушается, равен нулю.

\medskip
Предположим, что $f: D\rightarrow {\Bbb S}_*$ таково, что никакая
кривая $\alpha\subset D$ при отображении $f$ не переходит в точку.
Тогда (корректно) может быть определена функция
$L^{\,-1}_{\gamma,\,f}.$ В таком случае, будем говорить, что $f$
обладает {\it свойством $ACP^{\,-1}$} в области $D\subset {\Bbb S},$
пишем $f\in ACP^{\,-1},$ если для почти всех кривых
$\widetilde{\gamma}\in f(D)$ каждое поднятие $\gamma$ кривой
$\widetilde{\gamma}$ при отображении $f,$
$f\circ\gamma=\widetilde{\gamma},$ является локально спрямляемой
кривой, и, кроме того, обратная функция $L^{-1}_{\gamma,\,f}$
абсолютно непрерывна на всех отрезках, лежащих в
$\Delta_{\widetilde{\gamma}}$ для почти всех кривых
$\widetilde{\gamma}$ в $f(D)$ и каждого поднятия $\gamma$ кривой
$\widetilde{\gamma}=f\circ\gamma.$ Заметим, что если $f$ --
гомеоморфизм такой, что $f^{\,-1}\in W_{\rm loc}^{1, 2}(f(D)),$
всегда принадлежит классу~$ACP^{\,-1},$ см.~\cite[теорема~28.2]{Va}.
Будем говорить, что отображение $f$ имеет {\it $N$-свойство Лузина},
если $\widetilde{h_*}(f(E))=0$ для любого $E\subset D$ такого, что
$\widetilde{h}(E)=0.$ Аналогично, будем говорить, что отображение
$f$ имеет {\it $N^{\,-1}$-свойство Лузина}, если
$\widetilde{h}(f^{\,-1}(E_*))=0$ для любого $E_*\subset D_*$ такого,
что $\widetilde{h_*}(E_*)=0.$ Имеет место следующее утверждение, см.
также~\cite[лемма~3.1]{RV}.

\medskip
\begin{theorem}\label{th1}{\sl\, Пусть $D$
и $D_{\,*}$~--- области римановых поверхностей ${\Bbb S}$ и ${\Bbb
S}_*,$ соответственно, при этом, $\overline{D}$ и
$\overline{D_{\,*}}$ являются компактами. Пусть также $f$ --
дифференцируемое почти всюду отображение области $D$ на $D_*,$
принадлежащее классу~$ACP^{\,-1}$ и обладающее $N$ и
$N^{\,-1}$-свойствами Лузина. Тогда для каждого семейства (локально
спрямляемых) кривых $\Gamma$ в области $D$ и каждой допустимой
функции $\rho\in{\rm adm}\,\Gamma$ выполнено неравенство
\begin{equation}\label{eq1A}
M(f(\Gamma))\leqslant
\int\limits_{D}K_f(p)\cdot\rho^2(p)\,d\widetilde{h}(p)\,.
\end{equation}
}
\end{theorem}

\medskip
В силу~\cite[теорема~28.2]{Va} и \cite[следствие~B]{MM}, имеем также
следующее

\medskip
\begin{corollary}\label{cor1}
{\sl\, Пусть $D$ и $D_{\,*}$~--- области римановых поверхностей
${\Bbb S}$ и ${\Bbb S}_*,$ соответственно, при этом, $\overline{D}$
и $\overline{D_{\,*}}$ являются компактами. Пусть также $f$ --
области $D$ на $D_*,$ такие что $f\in W_{\rm loc}^{1, 2}(D)$ и
$f^{\,-1}\in W_{\rm loc}^{1, 2}(f(D)).$ Тогда выполняется
соотношение~(\ref{eq1A}).}
\end{corollary}

\medskip
{\bf 2. Предварительные замечания.} Прежде, чем переходить к
вспомогательным утверждениям и доказательству основных результатов,
сделаем некоторые важные замечания. Предположим, $F$ и $D_0$ --
некоторые фундаментальные множество и область, соответственно (см.
замечания, сделанные во введении). Пусть $\pi$ -- естественная
проекция ${\Bbb D}$ на ${\Bbb D}/G,$ тогда $\pi$ -- аналитическая
функция, конформная на $D_0$ (см. также
\cite[предложение~9.2.2]{Berd} и коментарии после~(2.11)
в~\cite{RV}). Для $z_1, z_2\in F$ положим
\begin{equation}\label{eq5}
d(z_1, z_2):=\widetilde{h}(\pi(z_1), \pi(z_2))\,,
\end{equation}
где $\widetilde{h}$ определено в~(\ref{eq2}). Заметим, что по
определению $d(z_1, z_2)\leqslant h(z_1, z_2).$ Покажем, что для
любого компакта $A\subset {\Bbb D}$ найдётся $\delta=\delta(A)>0:$
\begin{equation}\label{eq34}
d(z_1, z_2)=h(z_1, z_2), \quad \forall\,\, z_1, z_2\in A: h(z_1,
z_2)<\delta\,.
\end{equation}
Предположим противное. Тогда для произвольного $k\in {\Bbb N}$
найдутся комплексные числа $x_k, z_k\in A$ такие, что $h(z_k,
x_k)<1/k$ и, при этом, $d(z_k, x_k)<h(z_k, x_k).$ Тогда по
определению метрики $d$ и инвариантности метрики $h$ при
дробно-линейных отображениях единичного круга на себя, найдётся
$g_k\in G$ такое, что
\begin{equation}\label{eq37}
d(z_k, x_k)\leqslant h(z_k, g_k(x_k))<h(z_k, x_k)<1/k,\quad g_k\in
G, \quad k=1,2,\ldots \,.
\end{equation}
Поскольку $A$ -- компакт в ${\Bbb D},$ мы можем считать, что $x_k,
z_k\rightarrow x_0\in {\Bbb D}$ при $k\rightarrow\infty.$ Тогда из
(\ref{eq37}) по неравенству треугольника имеем $h(g_k(x_k),
x_0)\leqslant h(g_k(x_k), z_k)+h(z_k, x_k)\rightarrow 0$ при
$k\rightarrow\infty,$ и, значит, $h(x_k, g_k^{\,-1}(x_0))\rightarrow
0$ при $k\rightarrow\infty,$ поскольку метрика $h$ инвариантна при
дробно-линейном отображении. Но тогда также по неравенству
треугольника $h(g_k^{\,-1}(x_0), x_0)\leqslant h(g_k^{\,-1}(x_0),
x_k)+h(x_k, x_0)\rightarrow 0,$ $k\rightarrow\infty.$ Последнее
противоречит разрывности группы $G$ в ${\Bbb D},$ что и доказывает
(\ref{eq34}).

\medskip
Имеет место следующая

\medskip
\begin{lemma}\label{lem3}
{\sl Пусть $0<2r_0<1,$ тогда найдётся постоянная $C_1=C_1(r_0)$
такая, что
\begin{equation}\label{eq9B}
C_1\cdot h(z_1, z_2)\leqslant |z_1-z_2|\leqslant  h(z_1,
z_2)\quad\forall\,\, z_1, z_2\in B(0, r_0)\,. \end{equation}
Более того, правое неравенство в~(\ref{eq9B}) имеет место при всех
$z_1, z_2\in {\Bbb D}.$}
\end{lemma}

\medskip
\begin{proof}
Заметим, что по неравенству треугольника $0<|z_1-z_2|<2r_0.$ Поэтому
$r:=|z_1-z_2|$ изменяется в пределах от $0$ до $2r_0<1.$ Напомним,
что $h(z_1,
z_2)=\log\frac{1+\frac{|z_1-z_2|}{|1-z_1\overline{z_2}|}}{1-\frac{|z_1-z_2|}{|1-z_1\overline{z_2}|}}.$
Обозначая $r=|z_1-z_2|,$ заметим, что $h(z_1, z_2)\geqslant
\log\frac{1+r/2}{1-r/2}.$ Заметим, что
\begin{equation}\label{eq8C}
h(z_1, z_2)\geqslant \log\frac{1+r/2}{1-r/2}\geqslant r\,,\quad r\in
(0, 1)\,.
\end{equation}
В самом деле, функция $\varphi(r)=\log\frac{1+r/2}{1-r/2}-r$
возрастает по $r\in [0, 1],$ что проверяется взятием производной.
Значит, её минимум достигается при $r=0,$ т.е., $\varphi(r)\geqslant
0$ при всех $r\in (0, 1)$ и, значит, имеет место
неравенство~(\ref{eq8C}).

Установим левое неравенство в~(\ref{eq9B}). Для этого заметим, что
$h(z_1, z_2)\leqslant \log\frac{1-r^2_0+r}{1-r^2_0-r},$
$\log\frac{1-r^2_0+r}{1-r^2_0-r}\sim \frac{2}{1-r^2_0}\cdot r$ при
$r\rightarrow 0.$ Тогда при некотором $0<r_1<r_0$ и некотором
$M=M(r_0)$
$$h(z_1, z_2)\leqslant
\log\frac{1-r^2_0+r}{1-r^2_0+r}\leqslant M r\,,\quad r\in (0,
r_2)\,.$$
При $r\in [0, 1]$ функция $1-r^2_0-r$ строго положительна по $r.$
Поэтому функция $\frac1r\cdot\log\frac{1-r^2_0+r}{1-r^2_0-r}$
непрерывна на $r\in [r_1, 1]$ и, значит, ограничена при тех же $r$ с
некоторой постоянной $\widetilde{C}.$ Полагая $C^{\,-1}_1:=\max\{M,
\widetilde{C}\},$ получаем, что
\begin{equation}\label{eq8A}
h(z_1, z_2)\leqslant \log\frac{1-r^2_0+r}{1-r^2_0+r}\leqslant
C^{\,-1}_1\cdot r=C^{\,-1}_1\cdot |z_1-z_2|\qquad\forall\,\, z_1,
z_2\in B(0, r_0)\,.
\end{equation}
Лемма доказана.~$\Box$
\end{proof}

\medskip
Докажем следующее важное утверждение,
обобщающее~\cite[теорема~1.3(5)]{Va}.

\medskip
\begin{lemma}\label{lem2}
{\sl\,Предположим, кривая $\alpha:[a, b]\rightarrow {\Bbb D}$
спрямляема в смысле гиперболической длины $s_h$ в~(\ref{eq1}), кроме
того, $s_h=s_h(t)$ обозначает гиперболическую длину кривой $\alpha,$
подсчитанную на отрезке $[a, t],$ $a\leqslant t\leqslant b.$ Тогда
$\alpha^{\,\prime}(t)$ и $s_h^{\,\prime}(t)$ существуют при почти
всех $t\in [a, b],$ при этом,
\begin{equation}\label{eq8B}
\frac{2|\alpha^{\,\prime}(t)|}{1-|\alpha(t)|^2}=s_h^{\,\prime}(t)
\end{equation}
при почти всех $t\in [a, b].$
 }
\end{lemma}

\medskip
\begin{proof}
Функция $s_h=s_h(t)$ монотонна и потому почти всюду дифференцируема.
Кроме того, поскольку $\alpha(t)$ спрямялема, то найдётся $0<r_0<1$
такое, что $\alpha(t)\in B(0, r_0)$ при всех $t\in [a, b].$ Тогда из
леммы~\ref{lem3} вытекает, что кривая $\alpha$ также спрямляема в
евклидовом смысле, поэтому имеет ограниченную вариацию и, значит,
также дифференцируема почти всюду.

\medskip
Чтобы установить равенство~(\ref{eq8B}), будем следовать логике
рассуждений, использованных при
доказательстве~\cite[теорема~1.3(5)]{Va}. Прежде всего, исходя их
определения гиперболической длины кривой в~(\ref{eq1}), мы можем
записать, что
\begin{equation}\label{eq9}
\frac{h(\alpha(t), \alpha(t_0))}{|t-t_0|}\leqslant
\frac{|s_h(t)-s_h(t_0)|}{|t-t_0|}\,.
\end{equation}
Домножая числитель и знаменатель соотношения~(\ref{eq9}) на
$|\alpha(t)-\alpha(t_0)|,$ мы получим, что
\begin{equation}\label{eq9A}\frac{|\alpha(t)-\alpha(t_0)|}{|\alpha(t)-\alpha(t_0)|}
\cdot \frac{h(\alpha(t), \alpha(t_0))}{|t-t_0|}\leqslant
\frac{|s_h(t)-s_h(t_0)|}{|t-t_0|}\,.
\end{equation}
Выясним поведение функции $\varphi(t)=\frac{h(\alpha(t),
\alpha(t_0))}{|\alpha(t)-\alpha(t_0)|}$ при $t\rightarrow t_0.$
Поскольку $\log\frac{1+x}{1-x}\sim 2x$ при $x\rightarrow 0,$ то
$$\varphi(t)=\log\left(\frac{1+\frac{|\alpha(t)-\alpha(t_0)|}{|1-\alpha(t)\overline{\alpha(t_0)}|}}
{1-\frac{|\alpha(t)-\alpha(t_0)|}{|1-\alpha(t)\overline{\alpha(t_0)}|}}\right)\cdot\frac{1}{|\alpha(t)-\alpha(t_0)|}\sim
\frac{2|\alpha(t)-\alpha(t_0)|}{|1-\alpha(t)\overline{\alpha(t_0)}|}\cdot
\frac{1}{|\alpha(t)-\alpha(t_0)|}$$
при $t\rightarrow t_0.$ Тогда $\varphi(t)\rightarrow
\frac{2}{1-|\alpha(t_0)|^2}$ при $t\rightarrow t_0.$ В таком случае,
переходя в~(\ref{eq9A}) к пределу при $t\rightarrow t_0,$ получаем,
что
\begin{equation}\label{eq10}
\frac{2|\alpha^{\,\prime}(t)|}{1-|\alpha(t_0)|^2}\leqslant
s_h^{\,\prime}(t)
\end{equation}
при почти всех $t\in [a, b].$

Для завершения доказательства осталось установить противоположное
к~(\ref{eq10}) неравенство. Обозначим через $A$ множество всех точек
отрезка $[a, b],$ для которых $\alpha^{\,\prime}(t)$ и
$s_h^{\,\prime}(t)$ существуют и, при этом,
$$\frac{2|\alpha^{\,\prime}(t)|}{1-|\alpha(t_0)|^2}<
s_h^{\,\prime}(t)$$ Пусть $A_k$~--- множество всех точек $t\in A,$
для которых
$$\frac{s_h(q)-s_h(p)}{q-p}\geqslant \frac{h(\alpha(q),\alpha(p))}{q-p}+1/k\,,$$
где $a\leqslant p\leqslant t\leqslant q \leqslant b$ и $0<q-p<1/k.$
Ясно, что для завершения доказательства достаточно установить, что
$m_1(A_k)=0$ при всяком $k=1,2,\ldots,$ где $m_1$~--- мера Лебега в
${\Bbb R}^1.$

Пусть $l(\alpha)$ означает длину кривой $\alpha.$ Для произвольного
$\varepsilon>0$ рассмотрим разбиение отрезка $[a, b]$ точками
$a=t_1\leqslant t_2\leqslant\ldots\leqslant t_m=b$ так, что
$l(\alpha)\leqslant \sum\limits_{k=1}^m h(\alpha(t_k),
\alpha(t_{k-1}))+\varepsilon/k$ и $t_{j}-t_{j-1}<1/k$ при всех
$j=1,2,\ldots, m.$ Если $[t_{j-1}, t_j]\cap A_k\ne \varnothing,$ то
по определению множества $A_k,$ $s_h(t_j)-s_h(t_{j-1})\geqslant
h(\alpha(t_j), \alpha(t_{j-1}))+(t_{j}-t_{j-1})/k.$ Следовательно,
обозначив $\Delta_j:=[t_{j-1}, t_j],$ будем иметь:
$$m_1(A_k)\leqslant\sum\limits_{\Delta_j\cap A_k\ne \varnothing}m_1(\Delta_j)\leqslant
k\sum\limits_{j=1}^m (s_h(t_j)-s_h(t_{j-1})- h(\alpha(t_j),
\alpha(t_{j-1}))) \leqslant$$
$$\leqslant k\left(l(\alpha)-
\sum\limits_{j=1}^m h(\alpha(t_j), \alpha(t_{j-1}))\right)\leqslant
\varepsilon\,.$$
Последнее соотношение доказывает равенство $m_1(A_k)=0$ а, значит,
поскольку $A=\bigcup\limits_{k=1}^{\infty} A_k,$ $m_1(A)=0,$ что и
требовалось установить.~$\Box$
\end{proof}

\medskip
Пусть $I$ -- открытый, замкнутый или полузамкнутый конечный интервал
числовой прямой. Согласно~\cite[разд.~7.1]{He}
и~\cite[теорема~2.4]{Va}, произвольная спрямляемая кривая
$\gamma:I\rightarrow {\Bbb C}$ (соответственно, $\gamma:I\rightarrow
{\Bbb S}$) допускает параметризацию $\gamma(t)=(\gamma^0\circ
l_\gamma)(t),$ где $l_\gamma$ обозначает длину кривой $\gamma$ на
отрезке $[a, t].$ В зависимости от контекста эта длина может
пониматься как в евклидовом, так и в гиперболическом смысле, а также
в смысле римановой поверхности. В этом случае, кривая $\gamma^0:[0,
l(\gamma)]\rightarrow {\Bbb C}$ (соответственно, $\gamma^0:[0,
l(\gamma)]\rightarrow {\Bbb S}$) единственна и называется {\it
нормальным представлением} кривой~$\gamma.$ На основании сказанного,
из леммы~\ref{lem2} получаем следующее очевидное утверждение.

\medskip
\begin{corollary}\label{cor2}
{\sl\,Пусть $\alpha:[a, b]\rightarrow {\Bbb D}$ -- абсолютно
непрерывная кривая и $\rho:{\Bbb D}\rightarrow {\Bbb R}$ --
неотрицательная борелевская функция. Тогда
\begin{equation}\label{eq14}
\int\limits_{\alpha}\rho(x)\,ds_h(x)=\int\limits_a^b\frac{2\rho(\alpha(t))
|\alpha^{\,\prime}(t)|}{1-|\alpha(t)|^2}\,dt\,, \end{equation}
в частности,
\begin{equation}\label{eq10A}
l(\alpha)=\int\limits_a^b\frac{2
|\alpha^{\,\prime}(t)|}{1-|\alpha(t)|^2}\,dt\,.
\end{equation}
}
\end{corollary}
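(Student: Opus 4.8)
The plan is to reduce the line integral along $\alpha$ to an ordinary Lebesgue integral over $[a,b]$ by passing through the natural parametrization and then carrying out a change of variable governed by Lemma~\ref{lem2}. First I would recall the very definition of the integral of a nonnegative Borel function with respect to hyperbolic arc length: if $\alpha^0:[0, l(\alpha)]\rightarrow {\Bbb D}$ denotes the natural parametrization of $\alpha$ (see the discussion preceding this corollary together with \cite[����.~7.1]{He} and \cite[�������~2.4]{Va}), so that $\alpha(t)=\alpha^0(s_h(t))$ where $s_h(t)$ is the hyperbolic length of $\alpha\mid_{[a,t]}$, then $\int_{\alpha}\rho\,ds_h=\int_0^{l(\alpha)}\rho(\alpha^0(s))\,ds$. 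The whole task is thus to transport this last integral from the variable $s$ back to the variable $t$.

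Second, I would record the properties of the map $s_h:[a,b]\rightarrow [0, l(\alpha)]$ needed for the transport. It is nondecreasing, continuous and surjective, and — as is shown in the course of the proof of Lemma~\ref{lem2} — it is absolutely continuous on $[a,b]$. These are exactly the hypotheses under which a monotone function admits a legitimate change-of-variable formula, so no separate regularity input is required beyond what Lemma~\ref{lem2} already supplies.

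Third comes the core step. Since $s_h$ is monotone and absolutely continuous, for every nonnegative Borel function $g$ on $[0, l(\alpha)]$ one has the identity $\int_0^{l(\alpha)}g(s)\,ds=\int_a^b g(s_h(t))\,s_h^{\,\prime}(t)\,dt$. Applying this with $g=\rho\circ\alpha^0$, which is nonnegative and Borel as the composition of a Borel function with the continuous map $\alpha^0$, and using $\alpha^0(s_h(t))=\alpha(t)$, I obtain $\int_{\alpha}\rho\,ds_h=\int_a^b\rho(\alpha(t))\,s_h^{\,\prime}(t)\,dt$. Substituting the pointwise formula~(\ref{eq8B}) of Lemma~\ref{lem2}, namely $s_h^{\,\prime}(t)=\frac{2|\alpha^{\,\prime}(t)|}{1-|\alpha(t)|^2}$ valid for almost all $t\in[a,b]$, into this integral yields~(\ref{eq14}); taking $\rho\equiv 1$ then gives~(\ref{eq10A}) as an immediate special case.

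The main obstacle is justifying the change-of-variable identity when $s_h$ is only nondecreasing rather than strictly increasing, since $\alpha$ may be locally constant and $s_h^{\,\prime}$ may vanish on a set of positive measure, so $s_h$ cannot simply be inverted. The clean way around this is to work measure-theoretically rather than pointwise: first prove the identity for indicators $g=\chi_E$ of Borel sets $E\subset[0, l(\alpha)]$, where it amounts to the absolute-continuity statement $m_1(E)=\int_{s_h^{\,-1}(E)}s_h^{\,\prime}(t)\,dt$, and then extend to arbitrary nonnegative Borel $g$ through simple functions and the monotone convergence theorem. This route also makes transparent why $\rho$ need only be assumed nonnegative and Borel, with no continuity imposed.
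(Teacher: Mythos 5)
Your outline---define $\int_\alpha\rho\,ds_h$ through the natural parametrization $\alpha^0$, transport the integral back to $[a,b]$ by a change of variables in the monotone function $s_h$, and then substitute the a.e.\ identity~(\ref{eq8B})---is exactly the argument the paper has in mind: Corollary~\ref{cor2} is presented there as an immediate consequence of Lemma~\ref{lem2} and the preceding remark on natural parametrizations, with no further detail given. The problem is the regularity claim on which your transport step rests. You assert that $s_h$ is absolutely continuous on $[a,b]$ ``as is shown in the course of the proof of Lemma~\ref{lem2}''. It is not: that proof uses only the monotonicity of $s_h$ (to obtain differentiability a.e.) and then establishes~(\ref{eq8B}); no estimate of the kind needed for absolute continuity appears in it. Worse, under the hypotheses of the corollary as stated ($\alpha$ merely rectifiable) the claim is simply false. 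Take $\alpha(t)=c(t)/2$ on $[0,1]$, where $c$ is the Cantor function: this curve is continuous and rectifiable (by Lemma~\ref{lem3} its hyperbolic length is finite and is at least $1/2$), yet $\alpha^{\,\prime}(t)=0$ for a.e.\ $t$, hence by Lemma~\ref{lem2} itself $s_h^{\,\prime}(t)=0$ a.e., so the right-hand sides of~(\ref{eq14}) and~(\ref{eq10A}) vanish while $l(\alpha)>0$. Accordingly, the identity $\int_0^{l(\alpha)}g(s)\,ds=\int_a^b g(s_h(t))\,s_h^{\,\prime}(t)\,dt$ that you invoke for nonnegative Borel $g$ fails for merely rectifiable curves: for a nondecreasing $s_h$ only the inequality $\geqslant$ holds in general, and equality for all $g$ characterizes absolute continuity. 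Your concluding measure-theoretic paragraph does not repair this but restates it: the identity $m_1(E)=\int_{s_h^{-1}(E)}s_h^{\,\prime}(t)\,dt$, applied to $E=[0,l(\alpha)]$, \emph{is} the assertion that $s_h$ is absolutely continuous, so that route is circular.

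The missing ingredient is the hypothesis that $\alpha$ itself is absolutely continuous; this is precisely the assumption under which the Euclidean analogue of~(\ref{eq14}) is established in~\cite{Va}, and it is satisfied by every curve to which the paper actually applies the corollary in the proof of Theorem~\ref{th1} (there the curves are natural parametrizations up to an absolutely continuous change of parameter, see~(\ref{eq4A})). With that assumption added, your scheme is easy to complete: choosing $0<r_0<1$ with $|\alpha|\subset B(0,r_0)$ (possible by compactness, as in the proof of Lemma~\ref{lem2}), Lemma~\ref{lem3} together with the classical length formula for absolutely continuous curves gives $s_h(t_2)-s_h(t_1)\leqslant C_1^{\,-1}\int_{t_1}^{t_2}|\alpha^{\,\prime}(t)|\,dt$ for all $a\leqslant t_1\leqslant t_2\leqslant b$, so $s_h$ inherits absolute continuity from $\alpha$ by the absolute continuity of the Lebesgue integral, and then your change of variables and the substitution of~(\ref{eq8B}) go through verbatim. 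As written, however, your central step is both unproved and unprovable from the stated hypotheses, so the proposal has a genuine gap.
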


\medskip
Пусть $p_0\in {\Bbb S}$ и $z_0\in {\Bbb D}$ таково, что
$\pi(z_0)=p_0,$ где $\pi$ -- естественная проекция ${\Bbb D}$ на
${\Bbb D}/G.$ Обозначим через $D_0$ многоугольник Дирихле с центром
в точке $z_0,$ и положим $\varphi:=\pi^{\,-1}.$ Заметим, что
отображение $\varphi$ является гомеоморфизмом $({\Bbb S},
\widetilde{h})$ на $(F, d),$ где $\widetilde{h}$ -- метрика на
поверхности ${\Bbb S},$ а $d$ -- вышеопределённая метрика на
фундаментальном множестве $F,$ $D_0\subset F\subset\overline{D_0}.$
Не ограничивая общности, можно также считать, что $z_0=0.$ В самом
деле, в противном случае рассмотрим вспомогательное отображение
$g_0(z)=(z-z_0)/(1-z\overline{z_0}),$ не имеющее неподвижных точек
внутри единичного круга. Тогда, если $G$ -- группа дробно-линейных
отображений, соответствующая поверхности ${\Bbb S},$ то
$G^{\,\prime}=\{g_0\circ g, g\in G\},$ очевидно, также соответствует
${\Bbb S}$ в том смысле, что поверхность ${\Bbb S}$ снова является
конформно эквивалентной фактор-пространству ${\Bbb D}/G^{\,\prime}.$
Выберем компактную окрестность $V\subset {\Bbb D}$ точки $0\in
F\subset {\Bbb D},$ такую что $d(x, z)=h(x, z)$ при всех $x, z\in
V,$ что возможно ввиду условия~(\ref{eq34}). Кроме того, выберем $V$
так, чтобы $V\subset B(0, r_0)$ при некотором $0<r_0<1.$ Положим
$U:=\pi(V).$ Окрестность $U$ в этом случае назовём {\it нормальной
окрестностью точки $p_0.$}

Учитывая~\cite[разд.~8, леммы~8.2 и 8.3]{MRSY}, переходя к покрытию
римановой поверхности конечным или счётным числом нормальных
окрестностей и используя счётную полуаддитивность меры
$\widetilde{h},$ получаем следующее утверждение.

\medskip
\begin{proposition}\label{pr1}
{\sl\, Пусть отображение $f:D\rightarrow {\Bbb S}_*$ почти всюду
дифференцируемо в локальных координатах и, кроме того, обладает $N$
и $N^{\,-1}$-свой\-ст\-вами Лузина. Тогда найдётся не более чем
счётная последовательность компактных множеств $C_k^{\,*}\subset D,$
такая что $\widetilde{h}(B)=0,$ где $B=D\setminus
\bigcup\limits_{k=1}^{\infty} C_k^{\,*}$ и $f|_{C_k^{\,*}}$ взаимно
однозначно и билипшицево в локальных координатах для каждого
$k=1,2,\ldots .$ Более того, $f$ дифференцируемо при всех $x\in
C_k^{\,*}$ и выполнено условие $J_f(x)\ne 0.$}
\end{proposition}

\medskip
Пусть $\gamma:[a, b]\rightarrow {\Bbb S}$ -- (локально спрямляемая)
кривая на римановой поверхности ${\Bbb S}.$ Тогда определим функцию
$l_{\gamma}(t)$ как длину кривой $\gamma|_{[a, t]},$ $a\leqslant
t\leqslant b$ (где <<длина>> понимается в смысле римановой
поверхности). Для произвольного множества $B\subset {\Bbb S}$
положим
\begin{equation}\label{eq36}
l_{\gamma}(B)={\rm mes}_1\,\{s\in [0, l(\gamma)]: \gamma(s)\in
B\}\,,
\end{equation}
где, как обычно, ${\rm mes}_1$ обозначает линейную меру Лебега в
${\Bbb R},$ а $l(\gamma)$ -- длина~$\gamma.$ Аналогично можно
определить величину $l_{\gamma}(B)$ для штриховой линии $\gamma,$
т.е., когда $\gamma:~\bigcup\limits_{i=1}^{\infty}(a_i,
b_i)\rightarrow {\Bbb S},$ где $a_i<b_i$ при всех $i\in {\Bbb N}$ и
$(a_i, b_i)\cap (a_j, b_j)=\varnothing$ при всех $i\ne j.$

\medskip
Докажем теперь следующее утверждение, см.
также~\cite[теорема~33.1]{Va}.

\medskip
\begin{lemma}\label{lem1}
{\sl\, Предположим, множество $B_0\subset {\Bbb S}$ имеет нулевую
$\widetilde{h}$-меру. Тогда для почти всех кривых $\gamma$ в ${\Bbb
S}$
\begin{equation}\label{eq18}
l_{\gamma}(B_0)=0\,.
\end{equation}
}
\end{lemma}
\begin{proof}
Ввиду регулярности лебеговой меры найдутся борелево множество
$B\subset {\Bbb S}$ такое, что $B_0\subset B$ и
$\widetilde{h}(B_0)=\widetilde{B}=0,$ где $\widetilde{h}$ -- мера на
поверхности ${\Bbb S},$ определённая соотношением~(\ref{eq2B}).
Рассмотрим покрытие поверхности ${\Bbb S}$ всевозможными шарами вида
$\widetilde{B}(x_0, r_0),$ где $r_0=r(x_0)>0$ таково, что
$\widetilde{B}(x_0, r_0)$ лежит в некоторой нормальной окрестности
$U$ точки $x_0.$ Поскольку по предположению ${\Bbb S}$ --
пространство со счётной базой, по~\cite[теорема Линделёфа,
1.5.XI]{Ku$_1$} можно выделить последовательность точек $x_i,$
$i=1,2,\ldots ,,$ и соответствующих им радиусов шаров
$r_i=r_i(x_i),$ $i=1,2,\ldots ,,$ таких, что
$${\Bbb S}=\bigcup\limits_{i=1}^{\infty} \widetilde{B}(x_i, r_i)\,,\quad
\overline{\widetilde{B}}(x_i, r_i)\subset U_i\,.$$
Обозначим через $\varphi_i=\pi_i^{\,-1}$ отображение,
соответствующее определению нормальной окрестности $U_i$ (см.
комментарии, сделанные перед предложением~\ref{pr1}). Пусть $g_i$ --
характеристическая функция множества $\varphi_i(B\cap U_i).$
Согласно~\cite[теорема~3.2.5, $m=1$]{Fe}, будем иметь:
\begin{equation}\label{eq19}
\int\limits_{\varphi_i(\gamma)}g_i(z)|dz|={\mathcal
H}^{\,1}(\varphi_i(B\cap |\gamma|))\,,
\end{equation}
где $\gamma:[a, b]\rightarrow {\Bbb S}$ -- произвольная локально
спрямляемая кривая, $|\gamma|$ -- носитель кривой $\gamma$ в ${\Bbb
S},$ а $|dz|$ -- элемент евклидовой длины. Рассуждая аналогично
доказательству~\cite[теорема~33.1]{Va}, полагаем
$$\rho(p)= \left \{\begin{array}{rr}\infty, & p\in B,
\\ 0 ,  &  p \notin B\ .
\end{array} \right.$$
Заметим, что $\rho$ -- борелева функция. Пусть $\Gamma_i$ --
подсемейство всех кривых из $\Gamma,$ для которых ${\mathcal
H}^{\,1}(\varphi_i(B\cap |\gamma|))>0.$ Ввиду~(\ref{eq19}) для
каждой $\gamma\in\Gamma_i$ имеем:
$$\int\limits_{\gamma\cap \widetilde{B}(x_i, r_i)}\rho(p)\,ds_{\widetilde{h}}(p)=\int\limits_{\varphi_i(\gamma)}
\rho(\pi_i(y))\,ds_h(y)=2\int\limits_{\varphi_i(\gamma)}\frac{\rho(\pi_i(y))}{1-|y|^2}|dy|=$$
$$=2\int\limits_{\varphi_i(\gamma)}\frac{g(y)\rho(\pi_i(y))}{1-|y|^2}|dy|=\infty\,,$$
где $\gamma\cap \widetilde{B}=\gamma|_{S_i}$ -- штриховая линия,
$S_i=\{s\in [0, l(\gamma)]: \gamma(s)\in \widetilde{B}(x_i, r_i)\}.$
Тогда $\rho\in{\rm adm}\,\Gamma_i.$ Таким образом,
\begin{equation}\label{eq5A}
M(\Gamma_i)\leqslant \int\limits_{\Bbb
S}\rho^2(p)\,d\widetilde{h}(p)=0\,.
\end{equation}
Заметим, что
$\Gamma>\bigcup\limits_{i=1}^{\infty}\Gamma_i,$ поэтому из
соотношения~(\ref{eq5A}) вытекает, что
$M(\Gamma)\leqslant\sum\limits_{i=1}^{\infty}M(\Gamma_i)=0.$ Лемма
доказана.~$\Box$
\end{proof}

\medskip
Пусть $f:D\rightarrow {\Bbb C}$ (либо $f:D\rightarrow {\Bbb S}$) --
отображение, для которого образ никакой кривой в $D$ не вырождается
в точку. Пусть $I_0$ -- отрезок и $\beta: I_0\rightarrow {\Bbb C}$
(либо $\beta: I_0\rightarrow {\Bbb S}$) -- спрямляемая кривая. Пусть
также $\alpha: I\rightarrow D$ -- некоторая кривая, такая, что
$f\circ \alpha\subset \beta.$ Если функция длины $l_{\beta}:
I_0\rightarrow [0, l(\beta)]$ постоянна на некотором интервале
$J\subset I,$ то $\beta$ постоянна на $J$ и, ввиду предположения
относительно $f,$ кривая $\alpha$ также постоянна на $J.$ Отсюда
вытекает, что существует единственная кривая $\alpha^{\,*}:
l_\beta(I)\rightarrow D$, такая что $\alpha=\alpha^{\,*}\circ
(l_\beta|_I).$ Будем говорить, что $\alpha^{\,*}$ является {\it
$f$-пред\-став\-лением кривой $\alpha$ относительно~$\beta.$}

\medskip
{\bf 3. Доказательство теоремы~\ref{th1}.} Пусть $B_0$ and $C_k^*,$
$k=1,2,\ldots ,$ -- множества, соответствующие обозначениям
предложения~\ref{pr1}. Полагая $B_1=C_1^*,$ $B_2=C_2^*\setminus
B_1,\ldots ,$
\begin{equation} \label{eq7.3.7y} B_k=C_k^*\setminus
\bigcup\limits_{l=1}\limits^{k-1}B_l\,,
\end{equation}
мы получим счётное покрытие области $D$ попарно непересекающимися
множествами $B_k, k=0,1,2,\ldots $ такое, что
$\widetilde{h}(B_0)=0,$ $B_0=D\setminus
\bigcup\limits_{k=1}^{\infty} B_k.$ Поскольку по предположению
отображение $f$ обладает $N$-свойством в $D,$ то
$\widetilde{h_*}(f(B_0))=0.$

Поскольку $\overline{D}$ и $\overline{D_*}$ -- компакты, то
существуют конечные покрытия $U_i,$ $1\leqslant i\leqslant I_0$ и
$V_n,$ $1\leqslant n\leqslant N_0,$ такие, что
$$\overline{D}\subset\bigcup\limits_{i=1}^{I_0}U_i\,,\quad \overline{D_*}\subset\bigcup
\limits_{n=1}^{N_0}V_n\,,$$
где $U_i$ и $V_n$ -- нормальные окрестности некоторых точек $x_i\in
{\Bbb S}$ и $y_n\in {\Bbb S}_*.$ Можно выбрать эти покрытия таким
образом, чтобы $\widetilde{h}(\partial U_i)=\widetilde{h_*}(\partial
V_n)=0$ при каждых $1\leqslant i\leqslant I_0$ и $V_n,$ $1\leqslant
n\leqslant N_0.$ В частности, найдутся конформные отображения
$\varphi_i: U_i\rightarrow B(0, r_i),$ $0<r_i<1,$ и $\psi_n:
V_n\rightarrow B(0, R_n),$ $0<R_n<1,$ такие что длина и площадь в
$U_i$ и $V_n$ вычисляются при помощи карт $\varphi_i$ и $\psi_n$
согласно формул~(\ref{eq1}) и~(\ref{eq10A}). Положим
\begin{equation}\label{eq13}
R_0:=\max\limits_{1\leqslant n\leqslant N_0}R_n\,,\qquad
r_0:=\max\limits_{1\leqslant i\leqslant I_0}r_i\,.
\end{equation}
Положим теперь $U_1^{\,\prime}=U_1,$ $U^{\,\prime}_2=U_2\setminus
\overline{U_1},$ $U^{\,\prime}_3=U_3\setminus (\overline{U_1}\cup
\overline{U_2}),$ $\ldots, U^{\,\prime}_{I_0}=U_{I_0}\setminus
(\overline{U_1}\cup \overline{U_2}\ldots \overline{U_{I_0-1}}).$
Заметим, что по определению $U^{\,\prime}_i\subset U_i$ при
$1\leqslant i\leqslant I_0$ и $U^{\,\prime}_i\cap
U^{\,\prime}_j=\varnothing$ при $i\ne j.$ Кроме того,
$D=\left(\bigcup\limits_{i=1}^{I_0}U^{\,\prime}_i\right)\bigcup
B_0^{\,*},$ где $U^{\,\prime}_i$ открыты, а
$\widetilde{h}(B_0^{\,*})=0.$

\medskip
Аналогично, положим $V_1^{\,\prime}=V_1,$
$V^{\,\prime}_2=V_2\setminus \overline{V_1},$
$V^{\,\prime}_3=V_3\setminus (\overline{V_1}\cup \overline{V_2}),$
$\ldots, V^{\,\prime}_{N_0}=V_{N_0}\setminus (\overline{V_1}\cup
\overline{V_2}\ldots \overline{V_{N_0-1}}).$ По определению
$V^{\,\prime}_n\subset V_n$ при $1\leqslant n\leqslant N_0$ и
$V^{\,\prime}_n\cap V^{\,\prime}_j=\varnothing$ при $n\ne j.$ Кроме
того,
$D_*=\left(\bigcup\limits_{n=1}^{N_0}V^{\,\prime}_n\right)\bigcup
B_0^{\,**},$ где $V^{\,\prime}_n$ открыты, а
$\widetilde{h_*}(B_0^{\,**})=0.$

\medskip
Далее положим $U_{n, i}=f^{\,-1}(V^{\,\prime}_n)\cap
U^{\,\prime}_i.$ Заметим, что по построению и непрерывности $f$
множества $U_{n, i}$ являются открытыми. Кроме того, по
$N^{\,-1}$-свойству $\widetilde{h}(f^{\,-1}(B_0^{\,**}))=0.$ Таким
образом,
\begin{equation}\label{eq7A}
D\subset \left(\bigcup\limits_{1\leqslant i\leqslant I_0\atop
1\leqslant n\leqslant N_0}U_{n, i}\right)\bigcup
f^{\,-1}(B_0^{\,**})\bigcup B_0^{\,*}\,,
\end{equation}
см. рисунок~\ref{fig1} для иллюстрации.
\begin{figure}[h]
\centerline{\includegraphics[scale=0.5]{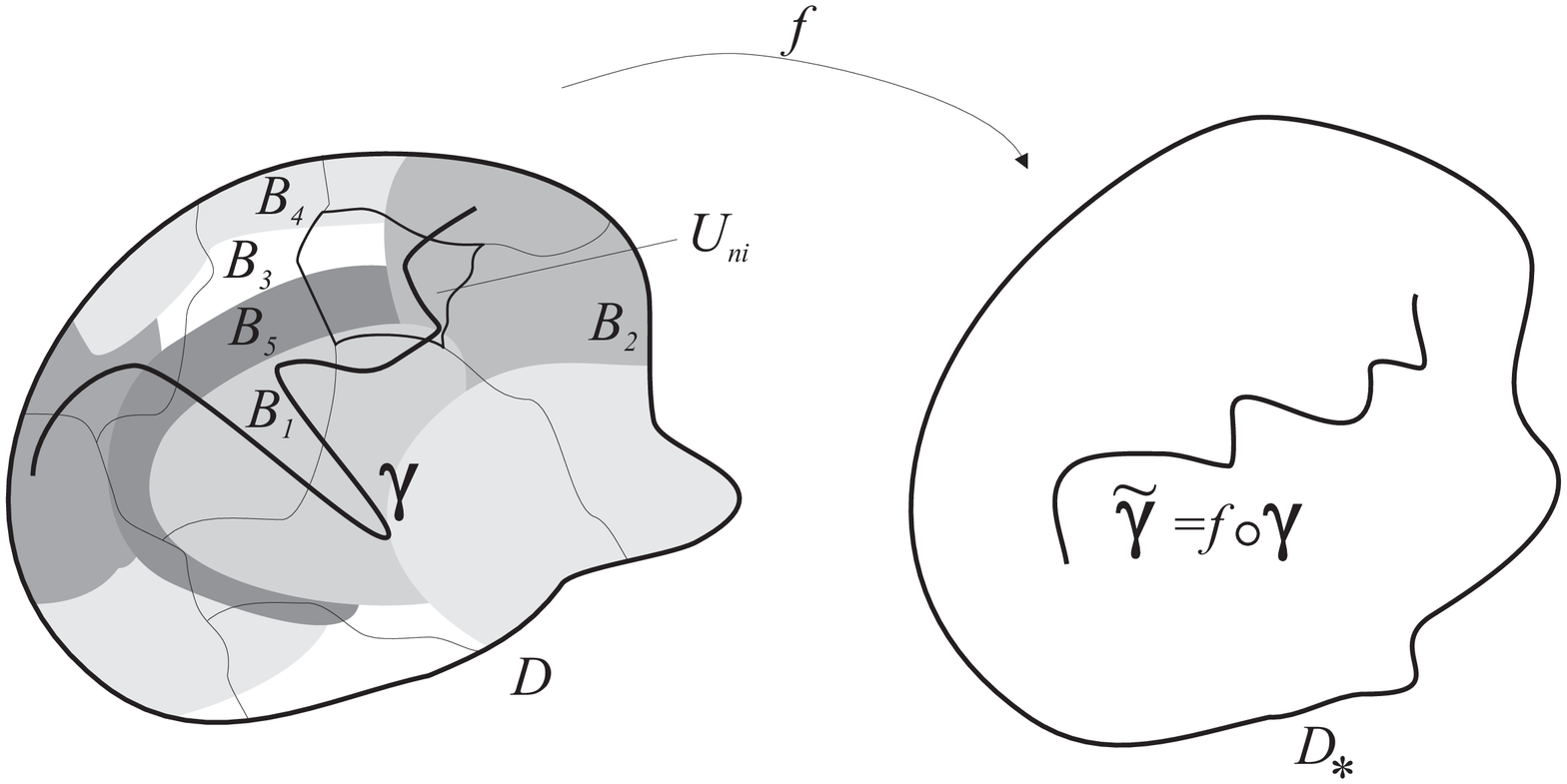}} \caption{К
доказательству теоремы~\ref{th1}}\label{fig1}
\end{figure}
Заметим, что равенство $U_{n_1i_1}=U_{n_2i_2}$ возможно лишь при
$n_1=n_2$ и $i_1=i_2.$ В самом деле, пусть $p\in U_{n_1i_1}\cap
U_{n_2i_2}.$ Тогда, в частности, $p\in U^{\,\prime}_{i_1}\cap
U^{\,\prime}_{i_2},$ что возможно лишь при $i_1=i_2,$ поскольку
$U^{\,\prime}_i\cap U^{\,\prime}_j=\varnothing$ при $i\ne j.$ Далее,
из условия $p\in U_{n_1i_1}\cap U_{n_2i_2}$ вытекает также, что
$f(p)\in V^{\,\prime}_{n_1}\cap V^{\,\prime}_{n_2},$ что также
невозможно, ибо $V^{\,\prime}_i\cap V^{\,\prime}_j=\varnothing$ при
$i\ne j.$ Значит, одновременно $i_1=i_2$ и $n_1=n_2,$ что и
требовалось.

Полагаем
$$f_{n, i}(p):=(\psi_n\circ f\circ\varphi_i^{\,-1})(\varphi_i(p))\,,\quad p\in U_{n, i}\,.$$
Пусть $\rho\in{\rm adm}\,\Gamma $ и
%
%
%
$$\widetilde{\rho}(p_*)=\chi_{f(D\setminus B_0)}\cdot
\sup\limits_{p\in f^{-1}(p_*)\cap D\setminus B_0}
\rho^* (p)\,,$$ 
где
%
$$\rho^{\,*} (p)= \left \{\begin{array}{rr}\rho
(p)/l(f_{n, i}^{\,\prime}(\varphi_i(p))),
&  \text{при }\quad p\in U_{n, i}\setminus B_0, \\
0, & \text{в других случаях.}\end{array} \right.$$
%
%
Заметим, что $\widetilde{\rho}(p_*)=\sup\limits_{k\in{\Bbb N},
1\leqslant i\leqslant I_0\atop 1\leqslant n\leqslant N_0}\rho_{k, i,
n}(p_*)$, где
$$\rho_{k, i, n}(p_*)= \left \{\begin{array}{rr}
\rho^*(f^{-1}_{k, i, n}(p_*)), &  {\rm при }\,\,\,  p_*\in f(B_k\cap U_{n, i}), \\
0, & \text{в других случаях,}\end{array} \right.
$$
где отображение $f_{k, i, n}=f|_{B_k\cap U_{n, i}},$ $k=1,2,\ldots,$
является инъективным. Отсюда следует, что функция $\widetilde{\rho}$
является борелевой, см.~\cite[разд.~2.3.2]{Fe}.

\medskip
Рассмотрим, прежде всего, случай, когда $\widetilde{\gamma}$ --
замкнутая спрямляемая кривая семейства $f(\Gamma).$ Тогда
$\widetilde{\gamma}:[a, b]\rightarrow {\Bbb S}_*$ и
$\widetilde{\gamma}=f\circ \gamma,$ где $\gamma\in\Gamma.$ Пусть
${\widetilde{\gamma}}^0$ -- нормальное представление кривой
$\widetilde{\gamma},$ и пусть $\gamma^*:[0,
l(\widetilde{\gamma})]\rightarrow D$ есть $f$-представление
относительно $\widetilde{\gamma},$ т.е.,
$f(\gamma^*(s))=\widetilde{\gamma}(s)$ при $s\in [0,
l(\widetilde{\gamma})].$
Заметим, что множество
$$S_{n, i}=\{s\in [0, l(\widetilde{\gamma})]:
\gamma^0(s)\in U_{n, i}\}$$
является открытым в ${\Bbb R}$ как прообраз открытого множества
$U_{n_i}$ при непрерывном отображении~$\gamma^*.$ Значит,
$\widetilde{\gamma}|_{S_{n, i}}$ представляет собой не более, чем
счётное число открытых дуг, длина каждой из которых вычисляется в
координатах $(V^{\,\prime}_n, \psi_n)$ при помощи гиперболической
метрики (см. замечания, сделанные во введении).  Обозначим
$\widetilde{\gamma}_{n, i}:=\widetilde{\gamma}|_{S_{n, i}}.$
Согласно сказанному, $\widetilde{\gamma}_{n,
i}=\bigcup\limits_{l=1}^{\infty}\widetilde{\gamma}^l_{n, i},$ где
$\widetilde{\gamma}^l_{n, i}$ -- некоторая открытая дуга. (Поскольку
кривая $\widetilde{\gamma}$ выбиралась замкнутой, то ровно две из
указанных дуг могут оказаться полуоткрытыми, однако, интервалы вида
$[a, c)$ и $(c, b]$ мы интерпретируем как открытые множества по
отношению к отрезку $[a, b ]$). Поскольку $f$ обладает
$N$-свойством, то $\widetilde{h_*}(B_0^{\,**}\cup f(B_0^{\,*}))=0.$
Пусть ${\widetilde{\gamma}}^0$ -- нормальное представление кривой
$\widetilde{\gamma}.$ Тогда по лемме~\ref{lem1} $[0,
l(\widetilde{\gamma})]=\bigcup\limits_{1\leqslant i\leqslant
I_0\atop 1\leqslant n\leqslant N_0}S_{n, i}\cup B_*,$ где $B_*$
имеет линейную меру нуль. В таком случае,
\begin{equation}\label{eq6}
\int\limits_{\widetilde{\gamma}}
\widetilde{\rho}(p_*)\,ds_{\widetilde{h_*}}(p_*)=\sum\limits_{1\leqslant
i\leqslant I_0\atop 1\leqslant n\leqslant N_0}\int\limits_{S_{n, i}}
\widetilde{\rho}(\widetilde{\gamma}^0(s))\,ds
\end{equation}
для почти всех кривых $\widetilde{\gamma}\in f(\Gamma).$ Поскольку
$\widetilde{h_*}(f(B_0))=0,$ то по лемме~\ref{lem1}
$\widetilde{\gamma}^0(s)\not\in f(B_0)$ при почти всех $s\in [0,
l(\widetilde{\gamma})]$ и почти всех кривых $\widetilde{\gamma}\in
f(\Gamma).$ Тогда для почти всех кривых $\widetilde{\gamma}$ и всех
$\gamma$ таких, что $\widetilde{\gamma}=f\circ \gamma$ мы получим,
что
$$
\int\limits_{S_{n, i}} \widetilde{\rho}(\widetilde{\gamma}^0(s))\,ds
= \int\limits_{S_{n, i}}\sup\limits_{p\in
f^{\,-1}(\widetilde{\gamma}^0(s))\cap D\setminus B_0}
\rho^{\,*}(p)\,ds\geqslant
$$
\begin{equation}\label{eq2*}
\geqslant \int\limits_{S_{n, i}}
\frac{\rho(\gamma^{\,*}(s))}{l(f_{n,
i}^{\,\prime}(\varphi_i(\gamma^*(s))))}\,ds\,.
\end{equation}

Поскольку ${\widetilde{\gamma}}$ спрямляема, то и
${\widetilde{\gamma}}^{\,0}$ спрямляема, в частности,
${\widetilde{\gamma}}^{\,0}(s)$ почти всюду дифференцируема (см.
лемму~\ref{lem3}). Покажем, что $\gamma^{\,*}$ абсолютно непрерывна
для почти всех кривых $\widetilde{\gamma}.$ В самом деле,
$\gamma^{\,*}$ спрямляема для почти всех $\widetilde{\gamma},$
поскольку $f\in ACP^{\,-1}.$ Пусть $L_{\gamma, f}^{\,-1}$ --
функция, упомянутая при определении $ACP^{\,-1}$-свойства. Тогда
$$\gamma^{\,*}\circ l_{\widetilde{\gamma}}(t)=\gamma(t)=\gamma^{\,0}\circ
l_{\gamma}(t)=\gamma^{\,0}\circ L_{\gamma,
f}^{\,-1}\left(l_{\widetilde{\gamma}}(t)\right)$$
Обозначая $s:=l_{\widetilde{\gamma}}(t),$ мы получим, что
\begin{equation}\label{eq15}\gamma^{\,*}(s)=\gamma^{\,0}\circ L_{\gamma, f}^{\,-1}(s)\,.
\end{equation}
Тогда $\gamma^{\,*}$ абсолютно непрерывна в локальных координатах,
поскольку по условию $L_{\gamma, f}^{\,-1}(s)$ абсолютно непрерывна
и
$$\widetilde{h}(\gamma^{\,0}(s_1), \gamma^{\,0}(s_2))\le |s_1-s_2|$$
при всех $s_1, s_2\in [0, l(\gamma)].$ Здесь мы также учитываем, что
локально $\widetilde{h}(\gamma^{\,0}(s_1), \gamma^{\,0}(s_2))$
совпадает с $h(\varphi(\gamma^{\,0}(s_1)),
\varphi(\gamma^{\,0}(s_2)))$ в соответствующих локальных координатах
$(U, \varphi),$ кроме того, $|\varphi(\gamma^{\,0}(s_1))-
\varphi(\gamma^{\,0}(s_2))|\leqslant h(\varphi(\gamma^{\,0}(s_1)),
\varphi(\gamma^{\,0}(s_2)))$ по лемме~\ref{lem3}.

Поскольку $\widetilde{\gamma}^0(s)\not\in f(B_0)$ для почти всех
$s\in [0, l(\widetilde{\gamma})]$ и почти всех кривых
$\widetilde{\gamma},$ то $\gamma^{\,*}(s)\not\in B_0$ для почти всех
$s\in [0, l(\widetilde{\gamma})].$ Следовательно, $\left(f_{n,
i}\left(\varphi_i(\gamma^{\,*}(s))\right)\right)^{\,\prime}$ и
$\left(\varphi_i(\gamma^{\,*}(s))\right)^{\,\prime}$ существуют при
почти всех $s\in [0, l(\widetilde{\gamma})]\cap S_{n, i}$ и каждых
$1\leqslant i\leqslant I_0,$ $1\leqslant n\leqslant N_0.$ Напомним,
что $\widetilde{\gamma}_{n,
i}=\bigcup\limits_{l=1}^{\infty}\widetilde{\gamma}^l_{n, i},$ где
$\widetilde{\gamma}^l_{n, i}:=\widetilde{\gamma}|_{\Delta^l_{n,
i}},$ $\Delta^l_{n, i}=(\alpha^l_{n, i}, \beta^l_{n, i}),$ либо
$\Delta^l_{n, i}=[\alpha^l_{n, i}, \beta^l_{n, i}),$ либо
$\Delta^l_{n, i}=(\alpha^l_{n, i}, \beta^l_{n, i}].$ Заметим, что
\begin{equation}\label{eq11}
l_{\widetilde{\gamma}}(s)=\alpha^l_{n, i}+s_h(s)\qquad
\forall\,\,s\in \Delta^l_{n, i}\,, l=1,2,\ldots\,,
\end{equation}
где $s_h(s)$ обозначает гиперболическую длину кривой
$\psi_n(\widetilde{\gamma}_{\Delta^l_{n, i}})$ на отрезке
$[\alpha^l_{n, i}, s].$ Из~(\ref{eq11}) и по лемме~\ref{lem2}
получаем, что при почти всех~$s\in \Delta^l_{n, i}$
\begin{equation}\label{eq12}
\left|\frac{d}{ds}\left(f_{n,
i}\left(\varphi_i(\gamma^{\,*}(s))\right)\right)\right|=\frac{1-|f_{n,
i}\left(\varphi_i(\gamma^{\,*}(s))\right)|^2}{2}\leqslant
\frac{1}{2}\,.
\end{equation}
С другой стороны, по правилу производной сложной функции, для почти
всех~$s\in \Delta^l_{n, i}$
$$\left|\frac{d}{ds}\left(f_{n,
i}\left(\varphi_i(\gamma^{\,*}(s))\right)\right)\right|=$$
\begin{equation}\label{eq2A}
=|f_{n, i}^{\,\prime}(\varphi_i(\gamma^{\,*}(s)))\cdot
(\varphi_i(\gamma^{\,*}(s)))^{\,\prime}| =\left|f_{n,
i}^{\,\prime}(\varphi_i(\gamma^{\,*}(s)))\cdot\frac{(\varphi_i(\gamma^{\,*}(s)))^{\,\prime}}
{|(\varphi_i(\gamma^{\,*}(s)))^{\,\prime}|}\right|\cdot
|(\varphi_i(\gamma^{\,*}(s)))^{\,\prime}|\geqslant
\end{equation}
$$\geqslant l(f_{n,
i}^{\,\prime}(\varphi_i(\gamma^{\,*}(s))))\cdot|(\varphi_i(\gamma^{\,*}(s)))^{\,\prime}|
\,.$$
Объединяя~(\ref{eq12}) и (\ref{eq2A}), получаем, что для почти всех
$s\in S_{n, i}$
\begin{equation}\label{eq3A}
\frac{\rho(\gamma^{\,*}(s))}{l(f_{n,
i}^{\,\prime}(\varphi_i(\gamma^{\,*}(s))))}\geqslant
2\rho(\gamma^{\,*}(s))\cdot
|(\varphi_i(\gamma^{\,*}(s)))^{\,\prime}|\,.
\end{equation}
Пусть $\gamma^0$ -- нормальное представление кривой $\gamma.$ Тогда,
поскольку $f\in ACP^{\,-1},$ то $\gamma^0(s_0)\not\in
f^{\,-1}(B_0^{\,**})\bigcup B_0^{\,*}$ при почти всех $s_0\in [0,
l(\gamma)]$ и почти всех кривых $\widetilde{\gamma}=f\circ \gamma$
(см.~\cite[теорема~2.10.13]{Fe}). Обозначим
$$Q_{n, i}=\{s_0\in [0, l(\gamma)]:
s_0\in U_{n, i}\}\,.$$

Тогда по абсолютной непрерывности кривой $\gamma^{\,*}(s),$ а также
ввиду соотношений~(\ref{eq14}), (\ref{eq7A}) и (\ref{eq15}),
получаем:
$$1\leqslant
\int\limits_{\gamma}\rho(p)\,ds_{\widetilde{h}}(p)=\sum\limits_{1\leqslant
i\leqslant I_0\atop 1\leqslant n\leqslant N_0}\int\limits_{Q_{n,
i}}\rho(\gamma^0(s_0))\,ds_0=
$$
\begin{equation}\label{eq4A}
=\sum\limits_{1\leqslant i\leqslant I_0\atop 1\leqslant n\leqslant
N_0}\int\limits_{S_{n,
i}}\frac{2\rho(\gamma^{\,*}(s))|(\varphi_i(\gamma^{\,*}(s)))^{\,\prime}|}{1-|\varphi_i(\gamma^{\,*}(s))|^2}\,ds\leqslant\frac{2}{1-r_0^2}
\sum\limits_{1\leqslant i\leqslant I_0\atop 1\leqslant n\leqslant
N_0}\int\limits_{S_{n,
i}}\rho(\gamma^{\,*}(s))|(\varphi_i(\gamma^{\,*}(s)))^{\,\prime}|\,ds\,.
\end{equation}
Объединяя~(\ref{eq6}), (\ref{eq2*}), (\ref{eq3A}) и (\ref{eq4A}),
заключаем, что
$\int\limits_{\widetilde{\gamma}}\widetilde{\rho}(p_*)\,ds_{\widetilde{h_*}}(p_*)\geqslant
1$ для почти всех замкнутых кривых $\widetilde{\gamma}\in
f(\Gamma).$ Случай произвольной кривой $\widetilde{\gamma}$ может
быть получен взятием $\sup$ в выражении
$\int\limits_{\widetilde{\gamma}^{\,\prime}}\widetilde{\rho}(p)\,ds_{\widetilde{h_*}}(p_*)\geqslant
1$ по всем замкнутым подкривым $\widetilde{\gamma}^{\,\prime}$
кривой $\widetilde{\gamma}.$ Следовательно,
$\frac{1}{1-r_0^2}\cdot\widetilde{\rho}\in{\rm adm}\,f(\Gamma).$
Значит,
\begin{equation}\label{eq3*}
M\left(f\left(\Gamma\right)\right)\leqslant
\frac{1}{(1-r_0^2)^2}\int\limits_{D_*}
{\widetilde{\rho}}^{\,2}(p_*)\, dh_*(p_*)\,.
\end{equation}

\medskip
Согласно~\cite[теорема~3.2.5, $m=n$]{Fe}, мы получим, что
$$\int\limits_{U_{n, i}\cap
B_k}
K_f(p)\cdot\rho^2(p)\,d\widetilde{h}(p)=$$$$=4\int\limits_{\varphi_i(U_{n,
i}\cap B_k)} \frac{\Vert(\psi_n\circ f\circ
\varphi^{\,-1}_i)^{\,\prime}(x)\Vert^2}{\det\{(\psi_n\circ f\circ
\varphi^{\,-1}_i)^{\,\prime}(x)\}(1-|x|^2)^2}\cdot\rho^2(\varphi^{\,-1}_i(x))\,dm(x)\geqslant$$
$$\geqslant 4\int\limits_{\varphi_i(U_{n,
i}\cap B_k)} \frac{\Vert(\psi_n\circ f\circ
\varphi^{\,-1}_i)^{\,\prime}(x)\Vert^2}{\det\{(\psi_n\circ f\circ
\varphi^{\,-1}_i)^{\,\prime}(x)\}}\cdot\rho^2(\varphi^{\,-1}_i(x))\,dm(x)=$$
\begin{equation} \label{eq7.3.14}
=4\int\limits_{\psi(f((U_{n, i}\cap
B_k)))}\frac{\rho^{\,2}\left((f_k^{\,-1}\circ\psi_n^{\,-1})(y)\right)}
{\left\{l\left(f^{\,\prime}\left((\varphi_i\circ
f^{\,-1}_k\circ\psi^{\,-1}_n)(y)\right)\right)\right\}^2}\,dm(y)\geqslant
\end{equation}
$$\geqslant (1-R_0^2)^2\int\limits_{f(D)}\rho^{2}_{k, i, n}(p_*)\,d\widetilde{h_*}(p_*)\,.$$
Наконец, по теореме Лебега (см.~\cite[теорема~I.12.3]{Sa}),
учитывая~(\ref{eq3*}) и~(\ref{eq7.3.14}), получаем:
$$\int\limits_D K_f(p)\cdot\rho^2(p)\,d\widetilde{h}(p)=
\sum\limits_{{1\leqslant i\leqslant I_0,}\,\,{1\leqslant n\leqslant
N_0}\atop {1\leqslant k< \infty}}\int\limits_{U_{n, i}\cap B_k}
K_f(p)\cdot\rho^2(p)\,d\widetilde{h}(p)\geqslant$$
$$\geqslant (1-R_0^2)^2\int\limits_{f(D)}\sum\limits_{{1\leqslant i\leqslant I_0,}\,\,{1\leqslant n\leqslant
N_0}\atop {1\leqslant k< \infty}}\rho_{k, i,
n}^2(p_*)\,d\widetilde{h_*}(p_*)\geqslant$$
$$\geqslant (1-R_0^2)^2\cdot
\int\limits_{f(D)}\sup\limits_{{1\leqslant i\leqslant
I_0,}\,\,{1\leqslant n\leqslant N_0}\atop {1\leqslant k< \infty}}
\rho_{k, i, n}^2(p_*)\,d\widetilde{h_*}(p_*)=$$
$$=(1-R_0^2)^2\cdot\int\limits_{f(D)}{\widetilde{\rho}}^{\,2}(p_*)\,d\widetilde{h_*}(p_*)
\geqslant (1-R_0^2)^2(1-r_0^2)^2\cdot M(f(\Gamma))\,.$$
Окончательно, соотношение
\begin{equation}\label{eq1B}
M(f(\Gamma))\leqslant c\cdot
\int\limits_{D}K_f(p)\cdot\rho^2(p)\,d\widetilde{h}(p)
\end{equation}
выполняется при для каждой $\rho\in {\rm adm\,}\Gamma,$ где
$c:=\frac{1}{(1-R_0^2)^2(1-r_0^2)^2}.$ Устремляя в~(\ref{eq1B})
$r_0$ и $R_0$ к нулю, получаем желанное соотношение~(\ref{eq1A}).
Теорема~\ref{th1} доказана.~$\Box$

\medskip
{\bf 4. Граничное поведение отображений.} В заключение рассмотрим
приложение теоремы~\ref{th1} к вопросу о граничном поведении
отображений. Пусть $D$ -- область в ${\Bbb S},$  и пусть $E,$
$F\subset D$ -- произвольные множества. В дальнейшем через
$\Gamma(E,F, D)$ мы обозначаем семейство всех кривых
$\gamma:[a,b]\rightarrow D,$ которые соединяют $E$ и $F$ в $D,$ т.е.
$\gamma(a)\in E,\,\gamma(b)\in F$ и $\gamma(t)\in D$ при
$t\in(a,\,b).$ Условимся говорить, что граница $\partial D$ области
$D$ является {\it сильно достижимой в точке $p_0\in
\partial D$}, если для каждой окрестности $U$ точки $p_0$ найдётся компакт
$E\subset D,$ окрестность $V\subset U$ этой же точки и число $\delta
>0$ такие, что для любых континуумов $E$ и $F,$ пересекающих как
$\partial U,$ так и $\partial V,$ выполняется неравенство
$M(\Gamma(E, F, D))\geqslant \delta.$ Мы также будем говорить, что
граница $\partial D$  является {\it сильно достижима}, если она
является сильно достижимой в каждой своей точке. Для множества
$E\subset {\Bbb S},$ как обычно,
$$C(f, E)=\{p_*\in{\Bbb S}_*: \exists\,\, p_k\in D, p\in \partial E:
p_k\rightarrow p, f(p_k)\rightarrow p_*, k\rightarrow\infty\}\,.$$
Следуя~\cite[разд.~2]{IR} (см.~также~\cite[разд.~6.1, гл.~6]{MRSY}),
будем говорить, что функция ${\varphi}:D\rightarrow{\Bbb R}$ имеет
{\it конечное среднее колебание} в точке $p_0\in D$, пишем
$\varphi\in FMO(p_0),$ если
%
%
%
%
$$\limsup\limits_{\varepsilon\rightarrow
0}\frac{1}{\widetilde{h}(\widetilde{B}(p_0,
\varepsilon))}\int\limits_{\widetilde{B}(p_0,\,\varepsilon)}
|{\varphi}(p)-\overline{\varphi}_{\varepsilon}|\
d\widetilde{h}(p)<\infty\,,$$
%
%
где
$\overline{{\varphi}}_{\varepsilon}=\frac{1}
{\widetilde{h}(\widetilde{B}(p_0,
\varepsilon))}\int\limits_{\widetilde{B}(p_0, \varepsilon)}
{\varphi}(p) \,d\widetilde{h}(p).$ Справедливо следующее
утверждение.

\medskip
\begin{theorem}\label{th2}{\sl\, Пусть $D$
и $D_{\,*}$~--- области римановых поверхностей ${\Bbb S}$ и ${\Bbb
S}_*,$ соответственно, при этом, $\overline{D}$ и
$\overline{D_{\,*}}$ являются компактами. Пусть также $f$ --
открытое дискретное дифференцируемое почти всюду отображение области
$D$ на $D_*,$ принадлежащее классу~$ACP^{\,-1}$ и обладающее $N$ и
$N^{\,-1}$-свойствами Лузина. Предположим, область $D$ локально
линейно связна в точке $b\in \partial D,$ $C(f,
\partial D)\subset \partial D^{\,\prime},$ и $\partial D^{\,\prime}$ сильно достижима хотя бы в одной своей точке
$p_*\in C(f, b).$ Если $Q\in FMO(b)$ и, кроме того, $Q$
удовлетворяет (\ref{eq7}) в точке $b,$ то $C(f, b)=\{p_*\}.$
}
\end{theorem}

\medskip
\begin{proof}
По теореме~\ref{th1} отображение $f$ удовлетворяет
соотношению~(\ref{eq1B}) для каждого семейства кривых $\Gamma$ в
области $D.$ В частности, для любых двух континуумов $C_0\subset
\overline{\widetilde{B}(b, r_1)},$ $C_1\subset {\Bbb S}\setminus
\widetilde{B}(b, r_2)$ выполняется условие
\begin{equation}\label{eq15A}
M(f(\Gamma(C_1, C_0, A)))\leqslant\int\limits_{A\cap
D}Q(p)\cdot\rho^2(p)\,d\widetilde{h}(p)\quad\forall\,\rho\in{\rm
adm\,}\Gamma(C_1, C_0, A)\,,
\end{equation}
$$A=A(b, r_1, r_2)=\{p\in {\Bbb S}: r_1<d(p, p_0)<r_2\}, \quad 0 < r_1 <
r_2 <\infty\,.$$
%
\end{proof}
Пусть $\eta:(r_1, r_2)\rightarrow [0, \infty]$ -- произвольная
измеримая по Лебегу функция, удовлетворяющая условию
$\int\limits_{r_1}^{r_2}\eta(t)\,dt\geqslant 1.$ Положим
$\rho(p)=\eta(\widetilde{h}(p, p_0)),$ тогда для произвольной
(локально спрямляемой) кривой $\gamma\in \Gamma(C_1, C_0, A)$
ввиду~\cite[предложение~13.4]{MRSY} выполнено условие
$\int\limits_{\gamma}\rho(p)\,ds_{\widetilde{h}}(p)\geqslant 1.$ В
таком случае,
\begin{equation}\label{eq16A}
M(f(\Gamma(C_1, C_0, A)))\leqslant\int\limits_{A\cap
D}Q(p)\cdot\eta(\widetilde{h}(p, p_0))\,d\widetilde{h}(p)\,.
\end{equation}
Заметим, что каждая кривая $\beta:[a, b)\rightarrow D_*$ имеет
максимальное $f$-поднятие с началом в точке $p\in
f^{\,-1}(\beta(a))$ в области $D$ (см.~\cite[лемма~2.1]{SM}).
Заметим также, что римановы поверхности локально регулярны по
Альфорсу (см., напр., \cite[теорема~7.2.2]{Berd}). В таком случае,
необходимое заключение вытекает из~\cite[теорема~5]{Sev$_1$}.~$\Box$


КОНТАКТНАЯ ИНФОРМАЦИЯ

\medskip
\noindent{{\bf Евгений Александрович Севостьянов} \\
{\bf 1.} Житомирский государственный университет им.\ И.~Франко\\
кафедра математического анализа, ул. Большая Бердичевская, 40 \\
г.~Житомир, Украина, 10 008 \\
{\bf 2.} Институт прикладной математики и механики
НАН Украины, \\
отдел теории функций, ул.~Добровольского, 1 \\
г.~Славянск, Украина, 84 100\\
e-mail: esevostyanov2009@gmail.com}

\end{document}